\definecolor{newcolor}{rgb}{.8,.349,.1}
\def\undertilde#1{\mathord{\vtop{\ialign{##\crcr
$\hfil\displaystyle{#1}\hfil$\crcr\noalign{\kern1.5pt\nointerlineskip}
$\hfil\widetilde{}\hfil$\crcr\noalign{\kern1.5pt}}}}}
\newcommand{\xx}{\mathbf{x}}
\newcommand{\ww}{\mathbf{w}}
\newcommand{\uu}{\mathbf{u}}
\newcommand{\vv}{\mathbf{v}}
\newcommand{\zz}{\mathbf{z}}
\newcommand{\va}{\mathbf{a}}
\newcommand{\vb}{\mathbf{b}}
\newcommand{\vc}{\mathbf{c}}
\newcommand{\hl}{\hat{\ell}}
\newcommand{\bk}{\mathbf{\kappa}}
\newcommand{\XX}{\mathbf{X}}
\newcommand{\ZZ}{\mathbf{Z}}
\newcommand{\CC}{\mathbf{C}}
\newcommand{\UU}{\mathbf{U}}
\newcommand{\VV}{\mathbf{V}}
\newcommand{\LLam}{\mathbf{\Lambda}}
\newcommand{\SSig}{\mathbf{\Sigma}}
\newcommand{\mX}{\utilde{\XX}}
\newcommand{\mx}{\utilde{\xx}}
\newcommand{\mC}{\utilde{\CC}}
\newcommand{\mV}{\utilde{\VV}}
\newcommand{\mLam}{\utilde{\LLam}}
\newcommand{\mlam}{{\utilde{{\lambda}}}}
\newcommand{\ns}{n_{\textsf{s}}}
\newcommand{\new}{\textsf{new}}
\newcommand{\TT}{\!\textsf{T}}
\newcommand{\GG}{\mathbf{G}}
\newcommand{\mG}{\utilde{\GG}}
\newcommand{\BB}{\mathbf{B}}
\newcommand{\tLLam}{\widetilde{\mathbf{\Lambda}}}
\newcommand{\mtLLam}{\widetilde{\mLam}}
\newcommand{\bg}{\mathbf{g}}
\newcommand{\mg}{\utilde{\bg}}
\newcommand{\ones}{\mathbbmss{1}}
\newtheorem{prop}{Proposition}
\newcommand\RR{\leavevmode\hbox{$\rm I\!R$}}
\newcommand{\NewText}[1]{{\color{black} #1}}
\begin{document}
\title{A kernel Principal Component Analysis (kPCA) \emph{digest} with a new backward mapping (pre-image reconstruction) strategy}

\author{Alberto  Garc\'ia-Gonz\'alez, Antonio Huerta, Sergio Zlotnik and Pedro D\'iez\\
Laboratori de C\`alcul Num\`eric, E.T.S. de Ingenier\'ia de Caminos, \\
Universitat Polit\`ecnica de Catalunya -- BarcelonaTech}

\maketitle 

\begin{abstract}
Methodologies for multidimensionality reduction aim at discovering low-dimensional manifolds where data ranges. Principal Component Analysis (PCA) is very effective if data have linear structure. But fails in identifying a possible dimensionality reduction if data belong to a nonlinear low-dimensional manifold. For nonlinear dimensionality reduction, kernel Principal Component Analysis (kPCA) is appreciated because of its simplicity and ease implementation. The paper provides a concise review of PCA and kPCA main ideas, trying to collect in a single document aspects that are often dispersed. Moreover, a strategy to map back the reduced dimension into the original high dimensional space is also devised, based on the minimization of a discrepancy functional.
\end{abstract}
%
%
%



\section{Introduction}\label{sec:Intro}
The kernel Principal Component Analysis (kPCA) is a very effective and popular technique to perform nonlinear dimensionality reduction \cite{Scholkopf1998}. It is applied to a large variety of fields: image processing and signal denoising \cite{rathi2006statistical,twining2001kernel,Mika1999}, face recognition \cite{Wang2012}, credible real-time simulations in engineering applications \cite{gonzalez2018kpca,lopez2018manifold,PD-DZGH-18}, health and living matter sciences \cite{Yuka2018,Devy2012,Grassi2014}... kPCA is often presented giving the general ideas and avoiding the fundamental mathematical details. These details are soundly described only in few selected papers. In particular, when it comes to the forward mapping (or dimensionality reduction, see  \cite{Alan2008}) and to the backward mapping or pre-image reconstruction, see \cite{Snyder2013}.

This paper aims at providing a kPCA \emph{digest}. That is, a condensed description with all the details necessary to understand and implement the method, alternative and complementary to the existing literature. 
Besides, motivated by different alternatives to achieve the backward mapping (or pre-image reconstruction) available in the literature \cite{kwok2004,Mika1999,Wang2012,Scholkopf1998kernel,rathi2006statistical,zheng2010penalized}, a new approach is proposed based on the minimization of a discrepancy functional (residual). This novel idea allows dealing with the very frequently used case of Gaussian kernel in a straightforward and efficient manner.

\section{Principal Component Analysis (PCA)}\label{sec:PCA}
\subsection{Diagonalizing the covariance matrix}\label{sec:PCA1}
Vectors $\xx^{1}, \xx^{2}, \dots, \xx^{\ns}$ in $\RR^{d}$ are seen as $\ns$ samples of some (possibly random) variable $\xx \in \RR^{d}$. It is assumed that $\ns \gg d$ such that the sample is representative of the variability of  $\xx$. The samples are collected in a $d\times \ns$ matrix $\XX=[\xx^{1} \, \xx^{2} \, \cdots  \, \xx^{\ns}]$. 

PCA  aims at discovering a linear model of dimension $k$, being $k \ll d$, properly representing the variability of $\xx$. Thus, eliminating the intrinsic redundancy in the $d$ dimensions of vector $\xx$. 

The covariance $d \times d$ matrix is defined as 
\begin{equation}\label{eq:MatrixC}
\CC =\XX \XX^{\TT}=\sum_{\ell=1}^{\ns} \xx^{\ell} (\xx^{\ell})^{\TT} .
\end{equation}
In fact, the symmetric and positive definite matrix  $\CC$ stands for the covariance matrix of $\xx$ if the mean value of $\xx$ is zero.  This is not a a loss of generality because it  simply  requires subtracting the mean value of $\xx$ to all the columns in $\XX$.
\NewText{In other words, this is a pre-process that consists in centering the samples.  This straightforward operation maps the set of samples to a neighborhood of the origin (around the zero). PCA aims at fitting a linear manifold (thus, containing the element zero) to the data. Therefore, using the centered data is easing the task of PCA. If the data is not centered, the element zero is generally far from the training set. Thus, in order to account for the affine character of the data set, PCA with non-centered data typically requires one additional dimension in the reduced space to obtain the same level of accuracy.}

Diagonalizing $\CC$ results in finding a diagonal matrix $\LLam$ of eigenvalues $\lambda_{1}\ge\lambda_{2}\ge \cdots \ge \lambda_{d}\ge0$  and a unit matrix $\UU$ such that 
\begin{equation}\label{eq:DiagC}
\CC = \UU \LLam \UU^{\TT} .
\end{equation}
Matrix $\UU$ describes an orthonormal basis in $\RR^{d}$, the basis formed by its columns $\UU=[\uu^{1} \, \uu^{2} \, \cdots  \, \uu^{d}]$. 

Vector $\zz = \UU^{\TT} \xx$ is the expression of $\xx$ in this new basis (note that $\xx = \UU \zz$). Thus, matrix $\ZZ = \UU^{\TT} \XX$ is the corresponding matrix of samples of $\zz$. The covariance of $\zz$ is 
$\ZZ \ZZ^{\TT} =   \LLam$.
Being the covariance of $\zz$ diagonal, the $d$ components of $\zz$ are uncorrelated (linearly independent) random variables.

\subsection{Reducing the dimension}\label{sec:PCA2}

The trace of $\CC$, which is equal to the trace of $\LLam$ and therefore the sum of $\lambda_{i}$,  for $i=1,\ldots,d$, is the total variance of the vector sample. If eigenvalues decrease quickly, a reduced number of dimensions $k$ is collecting a significant amount of the variance. This is the case if, for some small tolerance $\varepsilon$, $k$ is such that 
\begin{equation}\label{eq:reducedk}
\sum_{i=1}^{k} \lambda_{i}  \ge (1 - \varepsilon ) \sum_{i=1}^{d} \lambda_{i} 
\end{equation}
Thus, eigenvalues from $k+1$ to $d$ are neglected, and consequently the last $d-k$ columns of matrix $\UU$ 
(or the last rows of matrix $\UU^{\TT}$)
are not expected to contribute to describe the variability of $\xx$.
Accordingly, the last $d-k$ components of vector $\zz$ are suppressed without significant loss of information.

Thus, $d \times k$ matrix $\UU^{\star}=[\uu^{1} \, \uu^{2} \, \cdots  \, \uu^{k}]$, induces a new variable 
\begin{equation}\label{eq:zstar}
\zz^{\star} := \UU^{\star\TT} \xx \in\RR^{k}
\end{equation}
in the reduced-dimension space. 
Thus,  the samples in $\XX$ map into $\ZZ^{\star}=\UU^{\star\TT} \XX$ of reduced dimension $k\times \ns$.

The backward mapping (from reduced-dimension space $\RR^{k}$ to full dimension $\RR^{d}$) can be seen as a truncation of relation $\xx=\UU \zz$, that is rewritten as
\begin{equation}\label{eq:Backward1}
\xx = \sum_{i=1}^{d} [\zz]_{i} \uu^{i} \approx  \sum_{i=1}^{k} [\zz]_{i} \uu^{i}, 
\end{equation}
being $[\zz]_{i}$, the $i$-th component of vector $\zz$. In matrix form when restricted to the samples, this reads
\begin{equation}\label{eq:Backward2}
\XX = \UU \ZZ \approx  \UU^{\star} \ZZ^{\star}.
\end{equation}
The reduced-order models is interpreted as taking $\xx$ in the $k$-dimensional linear manifold (or subspace) generated by the basis $\{\uu^{1} , \uu^{2} , \cdots  , \uu^{k}\}$. The error introduced in the reduction of dimensionality (from $d$ to $k$) is associated with the discrepancy $\XX - \UU^{\star} \ZZ^{\star}$ and is decreasing as the tolerance $\varepsilon$ decreases (and $k$ increases).
\subsection{Singular Value Decomposition (SVD): an alternative to diagonalization.}\label{sec:SVD}

The SVD provides a factorization of $d\times \ns$ matrix $\XX$ of the form
\begin{equation}\label{eq:SVD1}
\XX = \UU \SSig \VV^{\TT}
\end{equation}
being $\UU$ and $\VV$ unit matrices of sizes $d\times d$  and $\ns \times \ns$, respectively, and $\SSig$ a \emph{diagonal} $d\times \ns$ matrix. The singular values  of $\XX$, $\sigma_{1}\ge\sigma_{2}\ge \cdots \ge \sigma_{d}\ge0$ are the diagonal entries of $\SSig$ (recall $\ns \ll d$).
\begin{equation*}\label{e:sdiag}
\SSig = 
\left[
\begin{array}{ccccccc}
       \sigma_1 & & & & 0 & \hdots & 0 \\
       &  \sigma_2  & & &0 & \hdots & 0 \\
       & &  \ddots & & 0 &  \hdots & 0  \\
       & & &  \sigma_{d} & 0 & \hdots & 0 
       \end{array} 
\right]  
\end{equation*}

Note that the diagonalization of $\CC$ is a direct consequence of the SVD
\begin{equation*}\label{eq:eq1}
\CC= \XX \XX^{\TT} 
=  \UU [\SSig  \SSig^{\TT}] \UU^{\TT} =  \UU \LLam \UU^{\TT} 
\end{equation*}
being $\LLam = \SSig  \SSig^{\TT}$. Thus, the eigenvalues of $\CC$ are precisely the squared singular values of $\XX$, that is $\lambda_{i}=\sigma_{i}^{2}$,  for $i=1,\ldots,d$.

\subsection{Permuting $d$ and $\ns$.}\label{sec:permuting}

The $\ns\times \ns$ matrix equivalent to $\CC$ taking $\XX^{\TT}$ as input matrix (organizing data by rows instead of columns), is
\begin{equation}\label{eq:MatrixG}
\GG=\XX^{\TT} \XX \,\text{ such that } [\GG]_{ij}=(\xx^{i})^{\TT} \xx^{j}. 
\end{equation}
It is denoted as Gramm matrix because it accounts for all the scalar products of the samples. This perspective of the problem aims at a reduction of the number of samples $\ns$, while keeping the representativity of the family. This methodology is often named as Proper Orthogonal Decomposition (POD): it coincides with PCA, but aiming at reducing the number of samples $\ns$  instead of the dimension $d$.

However, the fact of changing $\XX$ by $\XX^{\TT}$ is not relevant when using SVD. Namely, the SVD \eqref{eq:SVD1} reads
\begin{equation*}\label{eq:eq2}
\XX^{\TT} = \VV \SSig^{\TT} \UU^{\TT} .
\end{equation*}
And directly provides a diagonalization of the \emph{large} ($\ns \times \ns$) Gramm matrix $\GG=\XX^{\TT} \XX$ 
\begin{equation}\label{eq:eq3}
\GG=  \VV [\SSig^{\TT}  \SSig] \VV^{\TT} = \VV \tLLam  \VV^{\TT} ,
\end{equation}
where the diagonal $\ns \times \ns$ matrix $\tLLam = \SSig^{\TT}  \SSig$ has the same nonzero entries 
$\lambda_{i}$, for $i=1,\ldots,d$, as $\LLam$.

The dimensionality reduction is performed exactly in the same way as described in section \ref{sec:PCA2}, allowing to reduce the dimension of the space of the samples from $\ns$ to $k$. That is, to describe with enough accuracy any of the $\ns$ samples (or any other vector pertaining to the same family) as a linear combination of $k$ vectors which are precisely the first $k$ columns of matrix $\VV$.

\subsection{The equivalence of diagonalizing $\CC$ and  $\GG$}\label{sec:PCA3}
When diagonalizing the covariance matrix $\CC$ as indicated in \eqref{eq:DiagC}, the columns of the transformation matrix $\UU$ are precisely the eigenvectors of $\CC$, that is
\begin{equation}\label{eq:eig1}
\CC \uu^{i}=\lambda_{i} \uu^{i} \,\,\text{ for } i=1,\dots,d \, .
\end{equation}
The same happens with the Gramm matrix $\GG$ and matrix $\VV$ in \eqref{eq:eq3}, namely
\begin{equation}\label{eq:eig2}
\GG \vv^{i}=\lambda_{i} \vv^{i} \,\,\text{ for } i=1,\dots,d \, ,
\end{equation}
being $\vv^{i}$ the $i$-th column of $\VV$. It is worth noting that for the for $i=d+1,\dots,\ns$, the columns $\vv^{i}$ of matrix $\VV$ correspond to the zero eigenvalue (they describe the kernel space of the matrix).

The fact that the SVD described in section \ref{sec:SVD} provides in one shot both $\UU$ and $\VV$ suggests that the computational effort provided in diagonalizing $\CC$ (and therefore obtaining $\UU$, see \eqref{eq:DiagC}) is equivalent to the cost of diagonalizing $\GG$ (and obtaining $\VV$). This is not obvious at the first sight, because the size of $\CC$ is $d\times d$ and the size of $\GG$ is $\ns\times \ns$, hence much larger.  

This equivalence actually holds. Computing vectors $\uu^{i}$,  for $i=1,\dots,d$, is equivalent to compute vectors $\vv^{i}$. This idea is one of the cornerstones of the kernel Principal Component Analysis (kPCA) method.

Recalling \eqref{eq:MatrixC}, isolating $\uu^{j}$ from the right-hand-side of  \eqref{eq:eig1}, and using Appendix \ref{app2}, yields
\begin{multline}\label{eq:ueqsBx}
\uu^{j}= \frac{1}{\lambda_{j}}  \sum_{\ell=1}^{\ns}  (\xx^{\ell} \xx^{\ell \, \TT}) \uu^{j} 
          =  \sum_{\ell=1}^{\ns} \frac{1}{\lambda_{j}}  (\xx^{\ell \, \TT}  \uu^{j}) \xx^{\ell}
          = \sum_{\ell=1}^{\ns} [\BB]_{\ell j} \xx^{\ell}
\end{multline}
where, assuming that $\UU$ is available, the $\ns \times d$ matrix $\BB$ is introduced such that the entry with indices $\ell$, $j$, for $ \ell=1,\dots,\ns$ and $j=1,\dots,d$, reads
\begin{equation}\label{eq:matB}
[\BB]_{\ell j} =\frac{1}{\lambda_{j}} \xx^{\ell \, \TT} \uu^{j}.
\end{equation}

Matrix $\BB$ is computable and coincides precisely with the first $d$ columns of matrix $\VV$, see Appendix \ref{app:proof1}, that is
\begin{equation}\label{eq:matBeqsmatV}
[\BB]_{\ell j} = [\vv^{j}]_{\ell}.
\end{equation}

The reduced variable $\zz^{\star}$ of dimension $k$ is introduced in \eqref{eq:zstar} . For the samples in the \emph{training set}, that is for $\xx^{j}$, $j=1,\dots,\ns$, the $i$-th component of the reduced-dimensional samples read
\begin{align}\label{eq:zstar2}
[\zz^{j \,\star}]_{i} = (\uu^{i})^{\TT} \xx^{j} &=  (\sum_{\ell=1}^{\ns} [\BB]_{\ell i} \xx^{\ell})^{\TT} \xx^{j} \nonumber \\ 
&=\sum_{\ell=1}^{\ns} [\BB]_{\ell i} ((\xx^{\ell})^{\TT} \xx^{j}) = [\VV^{\star \, \TT} \GG]_{i j} 
\end{align}
for $i=1,\dots,k\le d$.
That is, the compact expression for the construction of the $k \times \ns$ matrix $\ZZ^{\star}$ of samples in the reduced space reads 
\begin{equation}\label{eq:final}
\ZZ^{\star}=
\underset{(k \times \ns)}{\VV^{\star\TT}}
\underset{(\ns \times \ns)}{\GG} .
\end{equation}
The backward mapping described in \eqref{eq:Backward2} is also written in terms of $\VV^{\star}$, namely
\begin{equation}\label{eq:backMap}
\underset{(d \times \ns)}{\XX} \approx \underset{(d \times k)}{\UU^{\star}}  \underset{(k \times \ns)}{\ZZ^{\star}}
= \XX  \underset{(\ns \times k)}{\VV^{\star}}  \underset{(k \times \ns)}{\ZZ^{\star}}
\end{equation}
Equation \eqref{eq:backMap} stands because a straightforward rearrangement of \eqref{eq:ueqsBx} taking into account \eqref{eq:matBeqsmatV} shows that 
\begin{equation}\label{eq:UeqXV}
\UU^{\star} = {\XX}  \VV^{\star} .
\end{equation}
Note that, when compared with equation \eqref{eq:Backward2}, in \eqref{eq:backMap} the unknown $\XX$ appears also in the right-hand-side when it has to be expressed in terms of $ \VV^{\star}$ (when  $\UU^{\star}$ is not available, as it is the case in section \ref{sec:backwardkPCA}). This is one of the difficulties to overcome in order to devise a backward mapping strategy for kPCA.

\section{kernel Principal Component Analysis}\label{sec:kPCA}

As shown above, PCA is a dimensionality reduction technique that, for some $d$-dimensional variable $\xx$,  discovers linear manifolds of dimension $k\ll d$  where $\xx$ lies (within some tolerance $\varepsilon$). 
In many applications, reality is not as simple and the low-dimensional manifold characterizing the variable is nonlinear. In these cases PCA fails in identifying a possible dimensionality reduction. Several techniques allow achieving nonlinear dimensionality reduction, see \cite{bishop:2006:PRML} and references therein. Among them, kernel Principal Component Analysis (kPCA) \cite{Scholkopf1998,rathi2006statistical} is appreciated for its simplicity and easy implementation. 

\subsection{Transformation to higher-dimension $D$}\label{sec:transfromPhi}
The kPCA conceptual idea is conceived by introducing an arbitrary transformation $\Phi$ from $\RR^{d}$ to $\RR^{D}$ for some \emph{very large dimension} $D\gg d$. This transformation into a large dimensional space is expected to \emph{untangle} (or to \emph{flatten}) the nonlinear manifold where $\xx$ belongs. That is, 
 $\Phi$ and $D$ are expected to be 
such that the variable $\mx = \Phi(\xx)$ mapped into $\RR^{D}$, lies in a linear low-dimensional manifold that is readily identified by the PCA. In other words, PCA is to be applied to the $D\times \ns$ matrix containing the transformed samples
\begin{equation}\label{eq:transfSamples}
\mX {=} [\Phi(\xx^{1}) \, \Phi(\xx^{2}) \, \cdots  \, \Phi(\xx^{\ns})] 
       {=} [\mx^{1} \, \mx^{2} \, \cdots  \, \mx^{\ns}] . 
\end{equation}

Assuming that $\Phi$ (and therefore $D$) are known, the standard version of PCA is applied; it diagonalizes the $D\times D$ covariance matrix $\mC = \mX \mX^{\TT}$, as described in section \ref{sec:PCA}. This introduces two obvious difficulties: first, $\Phi$ is unknown and, second, the dimension $D$ required to properly untangle the manifold is typically very large (in particular much larger than $\ns$) and consequently the computational effort to diagonalize $\mC$ is unaffordable.

Note however that the transformed Gramm matrix $\mG=\mX^{\TT} \mX$ is of size $\ns \times \ns$ (same as $\GG$).  Moreover, as shown in section \ref{sec:permuting}, diagonalizing $\mG$ reduces the dimension in the same way as diagonalizing $\mC$. 

Thus, in practice, the transformation $\Phi$ is mainly required to compute $\mG$. 

\subsection{The kernel trick}\label{sec:kernelTrick}

As noted above, the a priori unknown $\Phi$ is expected to map the nonlinear manifold into a linear one in a higher-dimensional space. In principle, it is not obvious to determine a proper $\Phi$ that \emph{aligns} the samples into a linear subspace of $\RR^{D}$. 

The kernel trick consists in defining directly $\mG$. Note that the generic term of matrix $\mG$ reads
\begin{equation}\label{eq:matmG1}
[ \mG]_{ij} = \Phi(\xx^{i})^{\TT} \Phi(\xx^{j})=(\mx^{i})^{\TT} \mx^{j} 
\end{equation}
for $i,j=1,\dots,\ns$. The kernel idea is to introduce, instead of function $\Phi(\cdot)$, some bivariate  symmetric form $\bk(\cdot,\cdot)$ such that 
\begin{equation}\label{eq:matmG2}
[ \mG]_{ij} =\bk(\xx^{i},\xx^{j}) .
\end{equation}
Comparing \eqref{eq:matmG1} and \eqref{eq:matmG2} reveals that selecting some $\bk(\cdot,\cdot)$ is, for all practical purposes, equivalent to selecting some $\Phi(\cdot)$.

A typical choice for the kernel is the Gaussian that reads
\begin{equation}\label{eq:Gausskernel}
\bk(\xx^{i},\xx^{j})= \exp(-\beta\Vert \xx^{i}-\xx^{j}\Vert^{2}) .
\end{equation}
Any other choice is valid provided that the matrix $\mG$ generated with the sample is symmetric and positive definite (recall that it must be a Gramm matrix). 
Many other alternative kernels are proposed in the literature \cite{Scholkopf1998,rathi2006statistical,twining2001kernel,Mika1999}. 
The idea is to select the kernel providing the best dimensionality reduction (the lower value of $k$).

\subsection{Centering the kernel}\label{sec:Centeringkernel}
As mentioned above, 
\NewText{centering the samples is a best practice  for improving the efficiency of PCA}. 
Note however that selecting some arbitrary $\Phi(\cdot)$ does not guarantee that the transformed sample remains centered. However, if $\Phi(\cdot)$ is available, the operation to center $\mX$ in \eqref{eq:transfSamples} is straightforward. Similarly,  selecting $\bk(\cdot,\cdot)$ produces a matrix $\mG$ which is, in general, not centered.

If $\Phi$ was known, centering the transformed sample requires redefining $\mx^{i}$ 
(changing $\mx^{i}$ into ${\overline \mx}^{i}$) as
\begin{equation}\label{eq:centerPhi}
{\overline \mx}^{i} = \Phi(\xx^{i})-\frac{1}{\ns} \sum_{\ell=1}^{\ns} \Phi(\xx^{\ell}).
\end{equation}
In the following, the centered magnitudes are denoted with an over bar, like ${\overline \mx}^{i}$ in \eqref{eq:centerPhi}. 

Thus, the generic term of the centered Gramm matrix, analogously to \eqref{eq:matmG1}, reads
$[\overline{\mG}]_{ij}=({\overline \mx}^{i})^{\TT} {\overline \mx}^{j}$, that results in 
\begin{multline*}
[\overline{\mG}]_{ij} = 
\Phi(\xx^{i})^{\TT}\Phi(\xx^{j})  {-}  \frac{1}{\ns} \sum_{\hl=1}^{\ns}  \Phi(\xx^{i})^{\TT}\Phi(\xx^{\hl}) \\
  {-} \frac{1}{\ns} \sum_{\ell=1}^{\ns} \Phi(\xx^{\ell})^{\TT} \Phi(\xx^{j})  
  {+}     \frac{1}{\ns^{2}} \sum_{\ell=1}^{\ns} \sum_{\hl=1}^{\ns} \Phi(\xx^{\ell})^{\TT}\Phi(\xx^{\hl})
\end{multline*}
or
\begin{equation*}
[\overline{\mG}]_{ij} =   \bk(\xx^{i},\xx^{j}){-}\frac{1}{\ns}\sum_{\hl=1}^{\ns}  \bk(\xx^{i},\xx^{\hl})
   {-} \frac{1}{\ns} \sum_{\ell=1}^{\ns} \bk(\xx^{\ell},\xx^{j})
   {+}  \frac{1}{\ns^{2}} \sum_{\ell=1}^{\ns} \sum_{\hl=1}^{\ns} \bk(\xx^{\hl},\xx^{\ell})  
\end{equation*}
Recalling \eqref{eq:matmG2}, this matrix results as
\begin{equation}\label{eq:centerdmG2}
\overline{\mG}=\mG{-}\frac{1}{\ns} \mG  \ones_{_{[\ns\!{\times}\!\ns]}} 
{-} \frac{1}{\ns}  \ones_{_{[\ns\!{\times}\!\ns]}} \mG  
{+} \frac{1}{\ns^{2}}  \ones_{_{[\ns\!{\times}\!\ns]}} \mG  \ones_{_{[\ns\!{\times}\!\ns]}} 
\end{equation}
being $\ones_{_{[\ns\!{\times}\!\ns]}} \in \RR^{\ns\times \ns}$ the $\ns{\times}\ns$ matrix having all its entries equal to one. Note that expression \eqref{eq:centerdmG2} provides the centered Gramm matrix $\overline{\mG}$ after a simple algebraic manipulation of matrix $\mG$ directly generated by kernel $\bk(\cdot,\cdot)$.

Accordingly, column ${\overline \mg}^{j}$ of matrix $\overline{\mG}$ is given by
\begin{equation}\label{eq:centcol}
{\overline \mg}^{j}=\mg^{j} 
{-} \underbrace{  \frac{1}{\ns} \mG \ones_{_{[\ns]}}}_{\text{aver}(\bg^{j})}
{-} \frac{1}{\ns} \ones_{_{[\ns\!{\times}\!\ns]}} \mg^{j} 
+ \bigl(\underbrace{\frac{1}{\ns^{2}} \ones_{_{[\ns]}}^{\TT} \mG \ones_{_{[\ns]}} }_{\text{aver}(\mG)}\bigr) \ones_{_{[\ns]}}
\end{equation}
being $\mg^{j}$ the $j$-th column of $\mG$, $\ones_{_{[\ns]}} = [1, 1, \dots, 1]^{\TT}\in \RR^{\ns}$. 
 Expression \eqref{eq:centcol} is useful in the following, as a vector transformation. 
\NewText{It is worth mentioning that, as in the case of PCA, centering the kernel matrix is a best practice, but not mandatory. In the following, the notation $\overline{\mG}$ and $\overline{\mg}$ is adopted assuming that both the matrix and the vector are centered, see \eqref{eq:centerdmG2} and \eqref{eq:centcol}. However, the same expressions are also valid if they are not centered and they correspond directly to the raw kernel, see \eqref{eq:matmG2}.  }

\subsection{Diagonalization and dimensionality reduction}\label{sec:diagmG}
Recall that PCA is to be applied to the transformed sample $\mX$. However, $\mC$ is not available: only $\mG$ and its centered version $\overline{\mG}$ are computable. Thus, the idea is to use POD (diagonalize $\overline{\mG}$, as shown in section \ref{sec:permuting}) and invoque the idea of section \ref{sec:PCA3}.
The diagonalization of $\overline{\mG}$ yields
\begin{equation}\label{eq:diagmG}
\overline{\mG} = \mV \mtLLam \mV^{\TT} .
\end{equation}
The eigenvalues of $\mtLLam$, $\mlam_{1}\ge\mlam_{2}\ge \cdots \ge \mlam_{\ns}\ge0$, are expected to decrease such that the first $k$ (with $k\ll \min(d,\ns)$) collect a significant amount of the full variance, with a criterion associated with some small tolerance $\varepsilon$, see \eqref{eq:reducedk}.
Accordingly, the reduced version of $\mV$ is $\mV^{\star}$ of size $\ns \times k$ (taking the first $k$ columns of $\mV$). The dimensionality reduction of the samples is performed as shown is section \ref{sec:PCA3}. That is, the $k \times \ns$ matrix of samples in the reduced space $\ZZ^{\star}$  is computed as
\begin{equation}\label{eq:dimRedNonlinearM}
\ZZ^{\star}=\mV^{\star \, \TT}  \overline{\mG}
\end{equation}
This is equivalent to compute each vector $\zz^{j}$ (the $j$-th sample $\xx^{j}$ mapped into the reduced space) as 
\begin{equation}\label{eq:dimRedNonlinearv1}
\zz^{j} = \mV^{\star \, \TT} {\overline \mg}^{j}
\end{equation}
which is precisely the forward mapping into the reduced space $\RR^{k}$ of the sample $\xx^{j}\in\RR^{d}$.

The question arises on how to map a new element $\xx^{\new} \in \RR^{d}$ which does not belong to the initial training set. This mapping is described in both \eqref{eq:zstar} and \eqref{eq:zstar2} for the linear case (PCA). In the nonlinear case the forward mapping follows the idea of \eqref{eq:dimRedNonlinearv1} with a vector $\mg^{\new}\in\RR^{\ns}$ defined from the kernel and the elements of the sample (the training set). Vector $\mg^{\new}$ is such that its $i$-th component reads
\begin{equation}\label{eq:dimRedNonlinearv2}
[ \mg^{\new}]_{i}=\bk(\xx^{i},\xx^{\new}) .
\end{equation}
Vector $\mg^{\new}$ is centered
replacing $\mg^{j}$ by $\mg^{\new}$ in  expression \eqref{eq:centcol} to obtain ${\overline \mg}^{\new}$.
The mapping of  $\xx^{\new}$ into the reduced space is finally obtained as
\begin{equation}\label{eq:dimRedNonlinearv3}
\zz^{\new} = \mV^{\star \, \TT} {\overline \mg}^{\new}
\end{equation}

Figure \ref{fig:scheme} shows how samples in the input space, $\xx^{j} \in \RR^{d}$ are mapped forward into $\zz^{j} \in \RR^{k}$, in the reduced space. This is ideally performed passing through the feature space, using PCA to reduce the dimensionality of the samples $\Phi(\xx^{j}) \in \RR^{D}$,  and then projecting them into the reduced space $\RR^{k}$ (grey dotted arrows in Figure \ref{fig:scheme}). In practice, the feature space is never used. With kPCA, the kernel trick introduces an alternative strategy (with no explicit definition of $\Phi$) that directly defines a mapping forward from $\xx^{j} \in \RR^{d}$ to $\zz^{j} \in \RR^{k}$ (represented by the blue dashed arrow in Figure \ref{fig:scheme}). 
\begin{figure}
	\centering
		\includegraphics[width=0.9\textwidth]{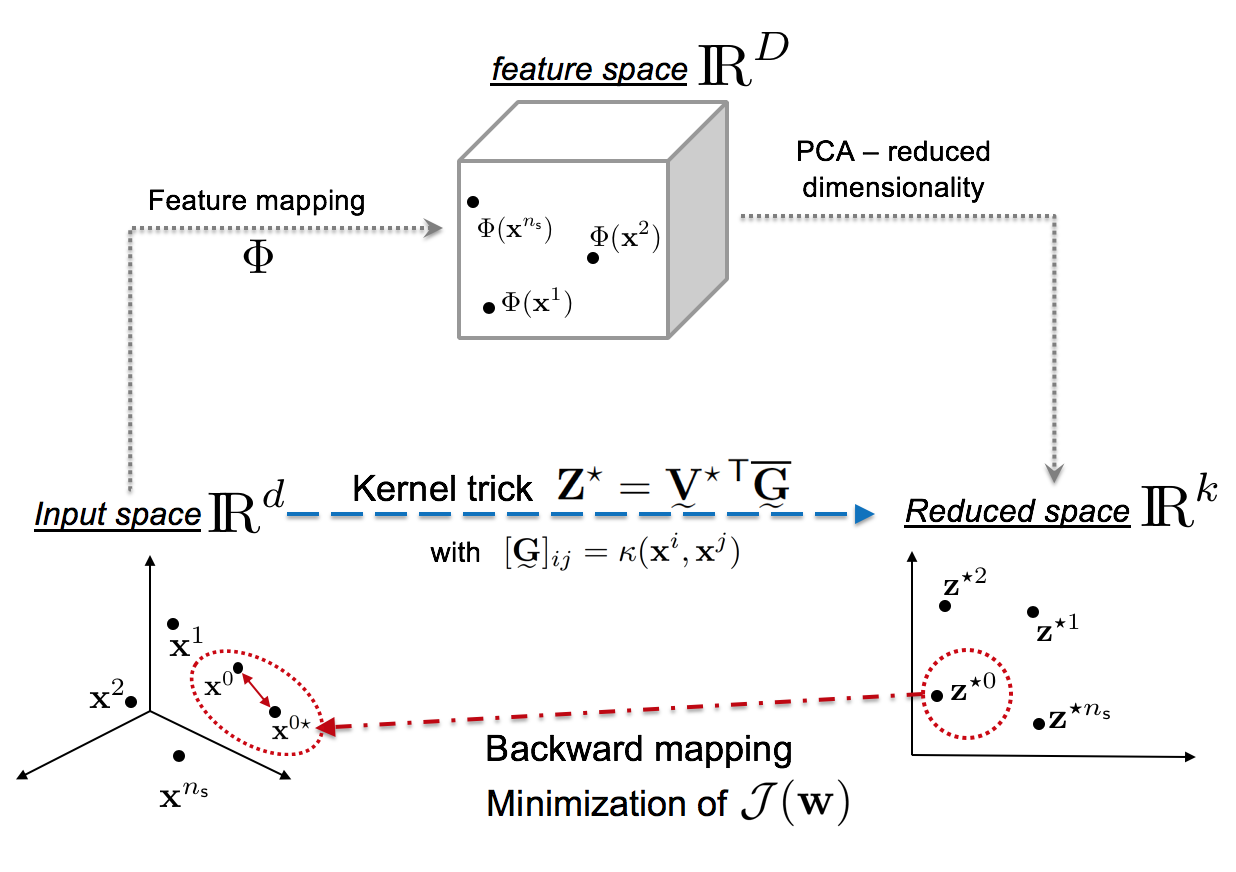}		
	\caption{Illustration the dimensionality reduction using kPCA.} 
	\label{fig:scheme}
\end{figure}

\section{kPCA backward mapping: from $\zz^{\star}$ to $\xx^{\star}$}\label{sec:backwardkPCA}
As noted in section \ref{sec:PCA3}, having only $\VV^{\star}$ does not allow properly mapping backwards an element $\zz^{\star}\in \RR^{k}$ into its pre-image $\xx^{\star}\in \RR^{d}$, (red dashed arrow in Figure \ref{fig:scheme}).

The typical strategy to perform the backward mapping consists in seeking the element $\xx^{\star}$ as a linear combination (or weighted average) of the elements of the training set, that is 
\begin{equation}\label{eq:weights1}
\xx^{\star}=\sum_{j=1}^{\ns} w_{j} \xx^{j}
\end{equation}
for some weights $w_{j}$, $j=1,\dots,\ns$. 
Note that the weights have to be computed as a function of the available data: that is, $\zz^{\star}$ and the samples in the training set. 
Often, the weights are explicitly computed in terms of the distances from $\zz^{\star}$ to all the samples mapped forward into the reduced space. That is, the distances $d_{i}=\Vert \zz^{\star} - \zz^{i} \Vert$ are computed in the reduced space and the weights $w_{i}$ are explicitly given as an inverse function of $d_{i}$ (the larger is the distance, the lower is the weight). Of course, this introduces some arbitrariness in the definition of the weights. There are many functions that decrease with the distance $d$: $w=1/d$; $w=1/d^{2}$; $w=\exp(-d)$... they all provide sensible results but with non-negligible discrepancies.

Here, we propose to select weights with an objective criterion, independent of any arbitrary choice. The idea is to adopt the form of \eqref{eq:weights1} and define the vector of unknown weights $\ww=[w_{1} \dots w_{\ns}]^{\TT}$. Then, vector ${\mg}^{\star}$ computed following \eqref{eq:dimRedNonlinearv2} becomes a function of the unknown weights as
\begin{equation}\label{eq:dimRedNonlinearv4}
[ \mg^{\star}]_{i}=\bk(\xx^{i},\xx^{\star})=\bk(\xx^{i},\sum_{\j=1}^{\ns} w_{j} \xx^{j}) .
\end{equation}
Vector $\mg^{\star}(\ww)$ is  centered using \eqref{eq:centcol} to become ${\overline \mg}^{\star}(\ww)$.
Then for a given value of $\zz^{\star}$, the following discrepancy functional is introduced:
\begin{equation}\label{eq:discrepancy}
{\mathcal J}(\ww)=\Vert \zz^{\star} - \VV^{\star \, \TT} {\overline \mg}^{\star}(\ww) \Vert^{2}
\end{equation}
Finally, the weights $\ww$ are selected such that they minimize the discrepancy functional,
\begin{equation}\label{eq:mini}
\ww = \arg \min_{\ww\in\RR^{\ns}} {\mathcal J}(\ww) .
\end{equation}
The minimization is performed with any standard optimization algorithm \cite{diez2003note,Ciarlet89}. In the example shown in the next section, the minimization method used is \texttt{fmincon}, interior-point algorithm implemented in \texttt{Matlab}, simply imposing the weights  to be nonnegative, that is $w_{j}\ge 0$.

In the case of the Gaussian kernel given in \eqref{eq:Gausskernel}, recalling that $\VV$ is a unit matrix (premultiplying by $\VV$ does not alter the norm)  and that the logarithm is monotonic, the previous discrepancy functional is replaced by an alternative form that measures discrepancy of the values of the logarithms
\begin{equation}\label{eq:discrepancy2}
{\mathcal J}(\ww)=\Vert \log( \VV^{\star}  \zz^{\star}) - \log({\overline \mg}^{\star}(\ww)) \Vert ^{2}.
\end{equation}
Note that functionals ${\mathcal J}$ in \eqref{eq:discrepancy} and \eqref{eq:discrepancy2} are different but they do lead to solutions minimizing the discrepancy of model and data.
\NewText{The strategy of using functional \eqref{eq:discrepancy2} precludes the possibility of centering the kernel as indicated in section \ref{sec:Centeringkernel}. That is, $\mG$ and $\mg$ must not be centered: with the centered versions $\overline{\mG}$ and $\overline{\mg}$, the arguments of the logarithms in \eqref{eq:discrepancy2}  are generally taking negative values.}

Using the expression of the Gaussian kernel,  \eqref{eq:dimRedNonlinearv4} becomes
\begin{equation}\label{eq:dimRedNonlinearv5}
[ \mg^{\star}]_{i}= \exp(-\beta\Vert \xx^{i}-\xx^{\star}\Vert^{2}) 
= \exp(-\beta\Vert \xx^{i}-(\sum_{j=1}^{\ns} w_{j} \xx^{j})\Vert^{2}) 
\end{equation}
and therefore the discrepancy functional ${\mathcal J}(\ww)$ adopts an explicit form in terms of the unknown weights $\ww$, namely
\begin{multline*}
{\mathcal J}(\ww)
=\sum_{i=1}^{\ns}  \bigr(\log([\VV^{\star}  \zz^{\star}]_{i})
                                + \beta\bigl\Vert \xx^{i}-(\sum_{j=1}^{\ns} w_{j} \xx^{j})\bigr\Vert^{2}\bigl)^{2} \\
=\sum_{i=1}^{\ns}  \bigr(\log([\VV^{\star}  \zz^{\star}]_{i})
                                + \beta[\Vert\xx^{i}\Vert^{2}-2\ww^{\TT} \XX^{\TT} \xx^{i} 
                                             + \ww^{\TT} \XX^{\TT}  \XX \ww]\bigr)^{2}  .
\end{multline*}

In order to ease the computational effort of the minimization algorithm it is convenient to reduce the number of unknown parameters by using some criterion based on the distance $d_{j}$. For instance, take as actual unknowns only the weights that correspond to samples with a distance in the reduced space lower than some threshold. This may drastically reduce the dimension of the minization problem from $\ns$ to the number of samples to be considered close enough to $\zz^{\star}$ (and hence to $\xx^{\star}$). 

\section{Numerical test}
The backward mapping proposed above (pre-image reconstruction) is tested in an example illustrated in Figure \ref{hands}. The sample  (training set) contains $\ns=69$ frames of a video consisting in an open hand that closes and opens once (nine of them shown in figure \ref{hands}).The number of grey-scale pixel values in each frame is $d= 1080 \times 1920 = 2073600$. An extra sample $\xx^{0}$ is kept apart to check the backward and forward mapping. Picture $\xx^{0}$ is a frame of the video outside the training set of $\ns$ samples and located in the center of the running time.
Intuitively,  this set of pictures can be described by a single parameter (the opening of the hand) and therefore the nonlinear manifold where the sample belongs is expected to be of dimension one.

\begin{figure}
	\centering
		\includegraphics[width=0.9\textwidth]{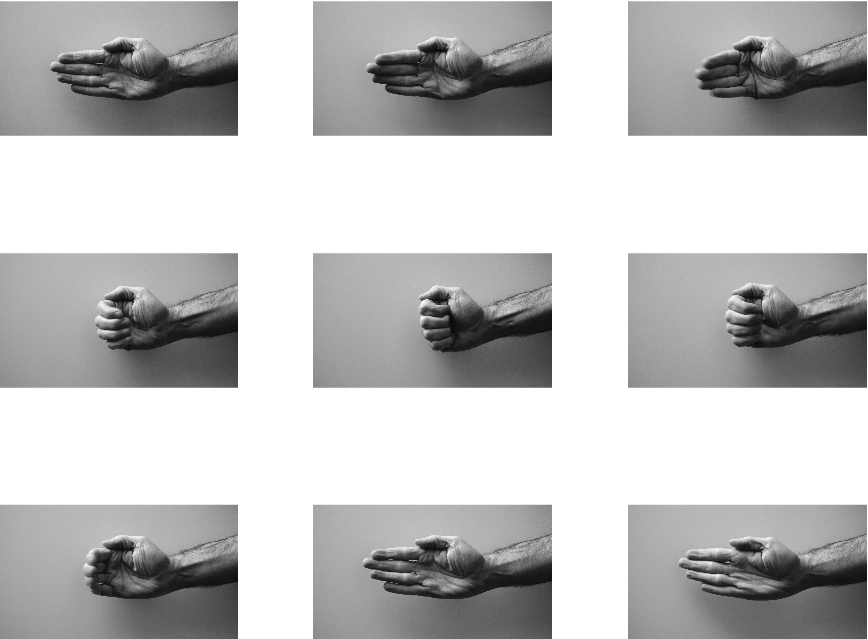}		
	\caption{Training set: 9 pictures are displayed (out of 69) of a hand in a sequence of closing and opening.} 
	\label{hands}
\end{figure}

The kPCA methodology described in section \ref{sec:kPCA} associated with an exponential kernel, 
see \eqref{eq:Gausskernel}, provides a reduced model of three variables $k=3$ that collects approximately 50\% of the variance (corresponds to $\varepsilon=0.5$). This low level of accuracy is preferred in the example ($\varepsilon=0.5$ is too large) because $k=3$ is allowing a graphical illustration. Actually, figure \ref{fig:graph3D} shows (in blue) the representation of the  reduced samples $\zz^{j \star}$, $j=1,\dots,69$. 
Note that the configuration of the samples in the 3D space suggest that the actual intrinsic dimension of the manifold is one.
The image $\xx^{0}$ is mapped to the 3D space using the strategy devised in Section \ref{sec:diagmG} (see red dot in Figure \ref{fig:graph3D}).

\begin{figure}
	\centering
		\includegraphics[width=0.9\textwidth]{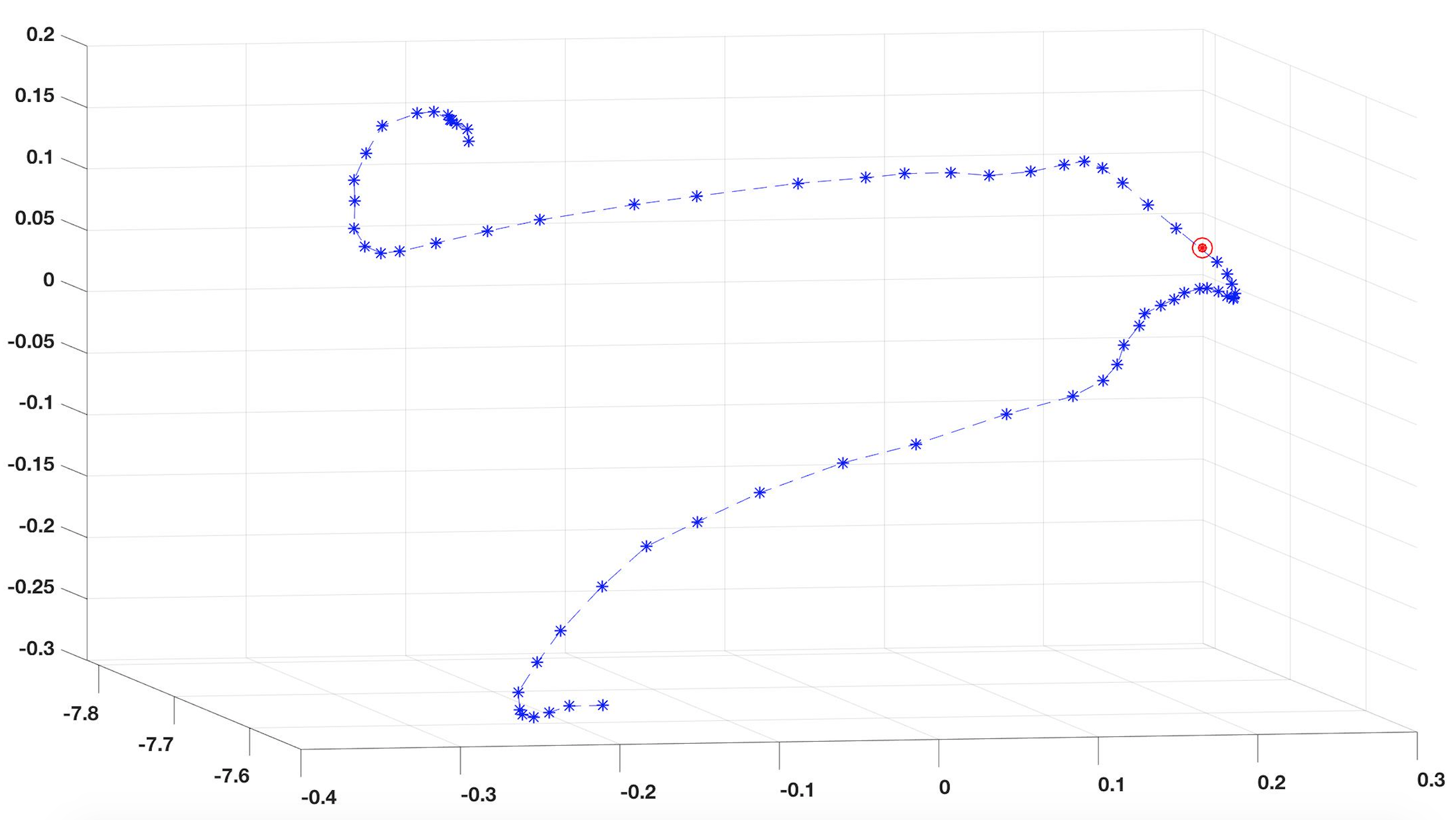}		
	\caption{Set of points representing the samples $\zz^{j \star}$, $j=1,\dots,\ns$ for $k=3$. The dashed line unites consecutive frames. The red dot is the forward mapping of the extra image $\xx^{0}$.} 
	\label{fig:graph3D}
\end{figure}

\begin{figure}
	\centering
		\includegraphics[width=0.9\textwidth]{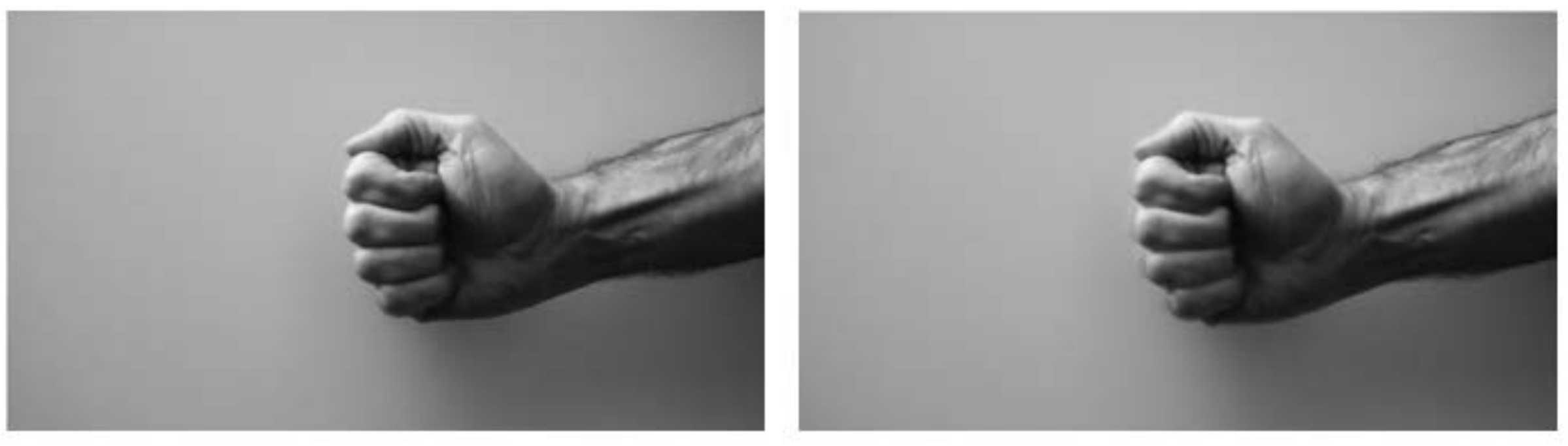}		
	\caption{Original picture $\xx^{0}$ out of training set  (left) and its pre-image approximation  $\xx^{0 \star}$(right, after consecutive forward and backward mapping). } 
	\label{fig:ex36}
\end{figure}

The new sample in the reduced space is mapped backward to the input space $\RR^{d}$ following the ideas in Section \ref{sec:backwardkPCA}. This produces the pre-image $\xx^{0 \star}$ which is an approximation to $\xx^{0}$, see Figure \ref{fig:ex36}.
Note that the dimensionality reduction (from 2073600 degrees of freedom to 3) is preserving most of the features of the image.

\begin{appendices}

\section{Three vectors product rule}\label{app2}
All along the paper, the following product rule is extensively used. 
For three arbitrary vectors $\va$ and $\vb$ and $\vc$ in  $\RR^{d}$, it holds that
\begin{equation}\label{eq:threeVectProd1}
(\va\vb^{\TT})\vc=\va(\vb^{\TT}\vc)
\end{equation}
or, using the tensorial and scalar product notation
\begin{equation}\label{eq:threeVectProd2}
(\va \otimes \vb)\vc=\va (\vb \cdot \vc)
\end{equation}
This is easily shown considering the index notation
\begin{equation}\label{}
[(\va \vb^{\TT})\vc]_{i}=[\va \vb^{\TT}]_{ik}\vc_{k}=\va_{i} \vb_{k}\vc_{k} =[\va(\vb^{\TT}\vc)]_{i} ,
\end{equation}
where the Einstein summation convention is used.

\section{The first $d$ columns of $\VV$ are equal to $\BB$ }\label{app:proof1}
\begin{prop}
The $\ns \times d$ matrix $\BB$ introduced in \eqref{eq:matB} coincides with the first $d$ columns of matrix $\VV$.
\end{prop}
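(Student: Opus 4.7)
The plan is to convert the entrywise definition of $\BB$ into a clean matrix identity and then read off the result directly from the Singular Value Decomposition $\XX = \UU \SSig \VV^{\TT}$ introduced in section \ref{sec:SVD}. My first step is to recognize that the numerator in \eqref{eq:matB} is precisely the $(\ell, j)$-entry of $\XX^{\TT} \UU$, and that dividing column by column by $\lambda_{j}$ amounts to right-multiplication by the $d \times d$ diagonal matrix $\LLam^{-1}$. This gives the compact identity
$$
\BB \;=\; \XX^{\TT}\, \UU \, \LLam^{-1}.
$$

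The second step is to substitute $\XX^{\TT} = \VV \SSig^{\TT} \UU^{\TT}$ and invoke the orthogonality relation $\UU^{\TT}\UU = \mathbf{I}_{d}$ to collapse the middle factor, leaving $\BB = \VV \, \SSig^{\TT} \, \LLam^{-1}$. The central $\ns \times d$ matrix $\SSig^{\TT} \LLam^{-1}$ is rectangular diagonal, with nonzero entries confined to its top $d \times d$ block. Right-multiplication by such a matrix acts on $\VV$ by selecting its first $d$ columns (with a diagonal rescaling that I expect to collapse using the relation $\lambda_{j} = \sigma_{j}^{2}$ established in section \ref{sec:SVD}). Comparing entries column by column then yields $[\BB]_{\ell j} = [\vv^{j}]_{\ell}$, which is the claimed identification.

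The main obstacle, and essentially the only algebraic content of the argument, is the careful bookkeeping of the rectangular diagonal factors. Because $\SSig$ is $d \times \ns$ rather than square, one has to track precisely which indices survive in the product $\SSig^{\TT} \LLam^{-1}$ and confirm that the scalar factors cancel in exactly the right way. Once that cancellation is made explicit, the whole proof reduces to one SVD substitution followed by a columnwise identification; no deeper structural argument appears to be needed, and no property beyond the SVD, the orthonormality of $\UU$, and the relation $\LLam = \SSig\SSig^{\TT}$ is invoked.
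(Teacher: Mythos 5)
Your route is genuinely different from the paper's and, carried out correctly, it is actually sharper --- but as written it contains one concrete error at the decisive step. The paper never touches the SVD in its proof: it substitutes $\uu^{j}=\sum_{\ell}[\BB]_{\ell j}\xx^{\ell}$ into the eigenvalue problem $\CC\uu^{j}=\lambda_{j}\uu^{j}$, tests the resulting identity against each sample $\xx^{\gamma}$, and arrives at $\GG^{2}\BB=\GG\LLam\BB$, hence $\GG\BB=\LLam\BB$, concluding that the columns of $\BB$ are eigenvectors of $\GG$ and therefore coincide with the first $d$ columns of $\VV$ \emph{``possibly up to a normalization constant.''} Your matrix identity $\BB=\XX^{\TT}\UU\LLam^{-1}$ is correct, and so is the substitution giving $\BB=\VV\SSig^{\TT}\LLam^{-1}$. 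The error is in the claimed cancellation: the $(j,j)$ entry of $\SSig^{\TT}\LLam^{-1}$ is $\sigma_{j}/\lambda_{j}=\sigma_{j}/\sigma_{j}^{2}=1/\sigma_{j}$, which does not collapse to $1$. What your computation actually yields is
\begin{equation*}
[\BB]_{\ell j}=\frac{1}{\sigma_{j}}\,[\vv^{j}]_{\ell},
\end{equation*}
not $[\BB]_{\ell j}=[\vv^{j}]_{\ell}$. This is not a defect of your method --- it is the method telling you the truth: the $j$-th column of $\BB$ has norm $1/\sigma_{j}$, so the exact identification \eqref{eq:matBeqsmatV} only holds up to the normalization constant that the paper itself hedges on at the end of its proof. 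Your approach buys an explicit value for that constant in one line, where the paper's eigenvector argument can only assert proportionality (and silently assumes the relevant eigenvalues are distinct so that eigenvectors are determined up to scale); but you must state the residual factor $1/\sigma_{j}$ rather than asserting that it cancels.
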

\begin{proof}
The $\ns \times d$ matrix $\BB$ is
\begin{equation*}
[\BB]_{\ell j} =\frac{1}{\lambda_{j}} \xx^{\ell \, \TT} \uu^{j}.
\end{equation*}
for $ \ell=1,\dots,\ns$ and $j=1,\dots,d$.
As shown in \eqref{eq:ueqsBx}
\begin{equation*}
\uu^{j}= \sum_{\ell=1}^{\ns} [\BB]_{\ell j} \xx^{\ell}
\end{equation*}
Thus, recalling that 
$\CC =\XX \XX^{\TT} =  \sum_{\ell=1}^{\ns} \xx^{\ell} \xx^{\ell \, \TT} $
the eigenvalue problem \eqref{eq:eig1} results in 
\begin{equation}\label{eq:proof1}
(\sum_{\ell=1}^{\ns} \xx^{\ell} \xx^{\ell \, \TT} )
\sum_{\hl=1}^{\ns} [\BB]_{\hl j} \xx^{\hl} 
=
\lambda_{j}  \sum_{\hl=1}^{\ns} [\BB]_{\hl j} \xx^{\hl}
\end{equation}
The left-hand-side of \eqref{eq:proof1} becomes
\begin{multline*}
 \sum_{\ell=1}^{\ns}   \sum_{\hl=1}^{\ns}  [\BB]_{\hl j}  (\xx^{\ell} \xx^{\ell \, \TT})  \xx^{\hl}
 =  \sum_{\ell=1}^{\ns}   \sum_{\hl=1}^{\ns}  [\BB]_{\hl j}  (\xx^{\ell , \TT} \xx^{\hl})  \xx^{\ell} \\
 =  \sum_{\ell=1}^{\ns}   \sum_{\hl=1}^{\ns}  [\BB]_{\hl j} [\GG]_{\ell \hl} \xx^{\ell}
 =  \sum_{\ell=1}^{\ns}  \xx^{\ell}  \sum_{\hl=1}^{\ns}  [\BB]_{\hl j} [\GG]_{\ell \hl} 
\end{multline*}
And left multiplying \eqref{eq:proof1} by $\xx^{\gamma \, \TT}$ yields
\begin{align*}
 \sum_{\ell=1}^{\ns} 
\underbrace{\xx^{\gamma \, \TT}  \xx^{\ell}  }_{[\GG]_{\gamma \ell}}
\sum_{\hl=1}^{\ns}  [\BB]_{\hl j} [\GG]_{\ell \hl} 
&= \lambda_{j}  \sum_{\hl=1}^{\ns} [\BB]_{\hl j} 
\underbrace{\xx^{\gamma \, \TT}  \xx^{\hl}  }_{[\GG]_{\gamma \hl}} \\
\Rightarrow  \sum_{\hl=1}^{\ns}  [\BB]_{\hl j}  \sum_{\ell=1}^{\ns} [\GG]_{\gamma \ell} [\GG]_{\ell \hl} 
&= \lambda_{j}  \sum_{\hl=1}^{\ns} [\BB]_{\hl j} [\GG]_{\gamma \hl} \\
\Rightarrow  \sum_{\hl=1}^{\ns}  [\BB]_{\hl j} [\GG^{2}]_{\gamma \hl}
&= \lambda_{j}  \sum_{\hl=1}^{\ns} [\BB]_{\hl j} [\GG]_{\gamma \hl} 
\end{align*}
The previous expression is written in matrix form as 
\begin{equation*}
\GG^{2} \BB = \GG \LLam \BB \Rightarrow \GG \BB = \LLam \BB
\end{equation*}
This proves that the $d$ columns of $\BB$ are indeed eigenvectors of $\GG$ associated with eigenvalues $\lambda_{i}$, for $i=1,\dots,d$ and therefore they do coincide with the first $d$ columns of $\VV$ (possibly up to a normalization constant).
\end{proof}

\end{appendices}

\section*{Acknowledgments}
This work is partially funded by Generalitat de Catalunya (grant number 1278 SGR 2017-2019) and 
Ministerio de Econom\'ia y Empresa and Ministerio de Ciencia, Innovaci\'on y Universidades (grant number DPI2017-85139-C2-2-R.
\medskip
\medskip
\bibliographystyle{plain}
\bibliography{Ref}

\end{document}